\newcommand{\al}{\alpha}
\newcommand{\be}{\beta}
\newcommand{\ga}{\gamma}
\newcommand{\la}{\lambda}
\newcommand{\de}{\delta}
\newcommand{\eps}{\varepsilon}
\newcommand{\bx}{\bar x}
\newcommand{\by}{\bar y}
\newcommand{\iv}{^{-1} }
\newcommand {\R} {\mathbb R}
\newcommand {\N} {\mathbb N}
\newcommand {\B} {\mathbb B}
\newcommand {\Sp} {\mathbb S}
\newcommand {\gph} {{\rm gph}\,}
\newcommand {\Int} {{\rm int}\,}
\newcommand{\toto}{\rightrightarrows}
\def\nbh{neighbourhood}
\def\es{\emptyset}
\def\SVM{set-valued mapping}
\def\Fr{Fr\'echet}
\newcommand{\norm}[1]{\left\Vert#1\right\Vert}
\newcommand{\set}[1]{\left\{#1\right\}}
\newcommand{\ang}[1]{\left\langle #1 \right\rangle}
\newcommand{\qdtx}[1]{\quad\mbox{#1}\quad}
\newcommand{\AND}{\quad\mbox{and}\quad}
\newcounter{mycount}
\newcommand{\rg}{{\rm rg}\,F(\bx,\by)}
\newcommand{\lip}{{\rm lip}\,F(\bx,\by)}
\newcommand\xqed{%
  \leavevmode\unskip\penalty9999 \hbox{}\nobreak\hfill
  \quad\hbox{$\triangleleft$}}
\newcommand\Rad[2]{{\rm rad[{#1}]}_{{\rm #2}}F(\bx,\by)}
\renewcommand{\rg}{{\rm{rg}}\,F(\bx,\by)}
\newcommand{\rga}{{\rm{rg^+}}\,F(\bx,\by)}
\renewcommand{\lip}{{\rm{lip}}\,}
\newcommand{\Bmv}{\;\Big\vert\;\;}
\begin{document}
\relsize{+1}
\title{The Radius of Metric Regularity
{Revisited}
\thanks{This note is dedicated to the memory of our friend and colleague Professor Asen Dontchev
\if{
The second author benefited
from the support of the Australian Research Council, project DP160100854, the European Union's Horizon 2020 research and innovation
programme under the Marie Sk{\l }odowska--Curie Grant Agreement No. 823731
CONMECH.
}\fi
} }
\author{
Helmut Gfrerer
\and
Alexander Y. Kruger}

\institute{
H. Gfrerer \at
Institute of Computational Mathematics, Johannes Kepler University Linz, A-4040 Linz, Austria\\
\email{helmut.gfrerer@kju.at},
ORCID 0000-0001-6860-102X
\and
A. Y. Kruger (corresponding author)\at
Optimization Research Group, Faculty of Mathematics and Statistics, Ton Duc Thang
University, Ho Chi Minh City, Vietnam\\
\email{alexanderkruger@tdtu.edu.vn},
ORCID 0000-0002-7861-7380
}
\maketitle

\begin{abstract}
The paper extends the radius of metric regularity theorem by Dontchev, Lewis \& Rockafellar (2003) by providing an exact formula for the radius with respect to Lipschitz continuous perturbations in general Asplund spaces, thus, answering affirmatively an open question raised twenty years ago by Ioffe.
In the non-Asplund case, we give a natural upper bound for the radius complementing the conventional lower bound in the theorem by Dontchev, Lewis \& Rockafellar.
\end{abstract}

\section{Introduction}
\label{intro}
Study of the ``radius of good behaviour'' was initiated in 2003 by Dontchev, Lewis \& Rockafellar \cite{DonLewRoc03}.
They aimed at quantifying the ``distance" from a given \emph{well-posed} problem to the set of ill-posed problems of the same kind.
The topic is obviously about stability of problems with respect to perturbations of the problem data, but it goes further than just establishing stability; the goal is to provide quantitative estimates (ideally exact formulas) of how far the problem can be perturbed before well-posedness is lost.
This is of significance, e.g., for computational methods.

It is common to describe ``good behaviour'' of problems in terms of certain regularity properties of (set-valued) mappings involved in modelling of the problems, and talk about the \emph{radius of regularity}.
Not surprisingly, the first \emph{radius theorems} were established in \cite{DonLewRoc03} for the fundamental property of \emph{metric regularity}, followed in \cite{DonRoc04} by the corresponding statement for the \emph{strong metric regularity}.
The definitions of the mentioned properties are collected below (cf. \cite{RocWet98,DonRoc14,Iof17}).
Here $F:X\rightrightarrows Y$ is a \SVM, and $F\iv:Y\rightrightarrows X$ denotes its inverse, i.e., $F^{-1}(y):=\{x\in X\mid y\in F(x)\}$ for all $y\in Y$.

\begin{definition}
\label{D1.1}
Let $X$ and $Y$ be metric spaces, $F:X\rightrightarrows Y$, and $(\bx,\by)\in\gph F$.
\begin{enumerate}
\item
$F$ is \emph{metrically regular} at $(\bx,\by)$ if there exist numbers $\al>0$ and $\de>0$ such that
\begin{equation}
\label{D1.5-1}
\al d(x, F^{-1}(y)) \leq d(y, F(x))
\quad \mbox{for all} \quad x\in B_\de(\bx),\; y\in B_\de(\by).
\end{equation}
\item
$F$ is \emph{strongly metrically regular} at $(\bx,\by)$
if it is metrically regular at $(\bx,\by)$, and $F\iv$ has a single-valued localization around $(\by,\bx)$,
i.e., there exist \nbh s $U$ of $\bx$ and $V$ of $\by$ and a function $\phi:V\to U$ such that $\gph\phi=\gph F\iv\cap(V\times U)$.
\end{enumerate}
\end{definition}

The (possibly infinite) supremum of all $\al$ satisfying \cref{D1.5-1} for some $\de>0$ is called the \emph{regularity modulus} of $F$ at $(\bx,\by)$ and is denoted by $\rg$.
It is also known as the \emph{modulus} (or \emph{rate}) of \emph{surjection} \cite{Iof17}, and
$\rg =1/{\rm reg\,}F(\bx\,|\,\by)$,
where ${\rm reg\,}F(\bx\,|\,\by)$ is the regularity modulus employed in \cite{DonLewRoc03,DonRoc04,DonRoc14}.
\if{
Thus,
\begin{gather}
\label{rg}
\rg=\liminf_{(x,y)\to(\bx,\by),\;x\notin F\iv(y)}
\frac{d(y,F(x))}{d(x,F^{-1}(y))}.
\end{gather}
}\fi
The case $\rg=0$ (or equivalently, ${\rm reg\,}F(\bx\,|\,\by)=+\infty$) indicates the absence of metric regularity.

In \cite{DonLewRoc03,DonRoc04},
the authors considered perturbations of a \SVM\ over the classes $\mathcal{F}_{lin}$ of affine (linear)  functions and $\mathcal{F}_{lip}$ of single-valued functions $f:X\to Y$, which are Lipschitz continuous near the reference point $\bx$, and used the \emph{Lipschitz modulus}
$$
\lip f(\bx)= \limsup_{\substack{x,x'\to\bx,\,x\neq x'}}     \frac{d(f(x),f(x'))}{d(x,x')}
$$
to measure the size of perturbations.

The next
theorem combines \cite[Theorem~1.5]{DonLewRoc03} and \cite[Theorems~4.6]{DonRoc04}; see also \cite[Theorems~6A.7 and 6A.8]{DonRoc14}.

\begin{theorem}
\label{T1.2}
Let $X$ and $Y$ be Banach spaces, $F:X\rightrightarrows Y$, $(\bx,\by)\in\gph F$, and $\gph F$ be closed near $(\bx,\by)$.
The following estimates hold true:
\begin{gather}
\label{T1.2-1}
\Rad{R}{lin}\geq
\Rad{R}{lip}\geq\rg,
\end{gather}
If $F$ is strongly metrically regular at $(\bx,\by)$, then
\begin{gather}
\label{T1.2-1a}
\Rad{sR}{lin}\geq
\Rad{sR}{lip}\geq\rg.
\end{gather}
If $\dim X<\infty$ and $\dim Y<\infty$, then all the inequalities in \cref{T1.2-1,T1.2-1a} hold as equalities.
Moreover, the equalities remain valid if $\mathcal{F}_{lin}$ is restricted to affine functions of rank $1$.
\end{theorem}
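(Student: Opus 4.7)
My plan is to split the theorem into a Banach-space part (the lower bounds) and a finite-dimensional part (the matching upper bound).

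The inequalities $\Rad{R}{lin}\ge\Rad{R}{lip}$ and $\Rad{sR}{lin}\ge\Rad{sR}{lip}$ are immediate from the inclusion $\mathcal{F}_{lin}\subset\mathcal{F}_{lip}$: enlarging the class of admissible perturbations can only decrease the infimum defining the radius. The key nontrivial bound $\Rad{R}{lip}\geq\rg$ is the content of the classical Lyusternik--Graves-type stability of metric regularity: whenever $f\in\mathcal{F}_{lip}$ satisfies $\lip f(\bx)<\rg$, the perturbed mapping $F+f$ remains metrically regular at $(\bx,\by+f(\bx))$, with modulus at least $\rg-\lip f(\bx)$. Hence no such $f$ can destroy metric regularity, and $\Rad{R}{lip}\geq\rg$. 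The parallel bound $\Rad{sR}{lip}\geq\rg$ in~\cref{T1.2-1a} follows from the analogous stability theorem for strong metric regularity due to Robinson.

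For the finite-dimensional equalities, I would invoke the Mordukhovich coderivative criterion, which for closed-graph $F$ in finite dimensions yields the exact formula
\[
\rg=\min\{\|x^*\|:x^*\in D^*F(\bx,\by)(y^*),\;\|y^*\|=1\},
\]
with the minimum attained. Fix $\eps>0$, select a pair $(x^*,y^*)$ realising a value at most $\rg+\eps$, and use compactness of the unit sphere to pick $v\in Y$ with $\|v\|=1$ and $\langle y^*,v\rangle=1$. Define the rank-one affine perturbation $f(x):=-v\langle x^*,x-\bx\rangle$. Then $\lip f=\|v\|\,\|x^*\|\leq\rg+\eps$, and the coderivative sum rule for smooth $f$ gives $0\in D^*(F+f)(\bx,\by+f(\bx))(y^*)$ with $y^*\neq 0$, so the Mordukhovich criterion fails and $F+f$ is not metrically regular. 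Letting $\eps\downarrow 0$ produces $\Rad{R}{lin}\leq\rg$ even when the admissible class is restricted to affine functions of rank~$1$. Since failure of metric regularity \emph{a fortiori} destroys strong metric regularity, the same construction takes care of~\cref{T1.2-1a}.

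The main technical obstacle is precisely this third step: matching the Lipschitz constant of the destabilising perturbation to $\rg$ requires both attainment of the Mordukhovich minimum above and attainment of $\|y^*\|$ by some unit $v\in Y$. Both are automatic in finite dimensions by compactness of the unit sphere, but fail in general infinite-dimensional Asplund spaces, where one has to work with approximate versions produced by variational principles. Bridging this gap is the technical heart of the extension announced in the abstract.
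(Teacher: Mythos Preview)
The paper does not contain its own proof of this theorem: \cref{T1.2} is quoted from the literature (it ``combines \cite[Theorem~1.5]{DonLewRoc03} and \cite[Theorem~4.6]{DonRoc04}''), and is used as background for the new results in \cref{T3.1,T3.2}. So there is no in-paper proof to compare against. That said, your outline is essentially the standard argument behind the cited references and is correct in substance.

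Two remarks on execution. First, in this paper's notation $D^*F(\bx,\by)$ denotes the \emph{Fr\'echet} coderivative (see \cref{cd}), whereas the exact formula you invoke,
\[
\rg=\min\bigl\{\|x^*\|:x^*\in D^*F(\bx,\by)(y^*),\ \|y^*\|=1\bigr\},
\]
and the Mordukhovich criterion both require the \emph{limiting} coderivative. In finite dimensions the limiting coderivative contains the Fr\'echet one at nearby points, and the sum rule you use ($D^*(F+f)=D^*F+\nabla f^*$ for smooth $f$) holds for the limiting object; just make the distinction explicit. Second, since the minimum is attained in finite dimensions, you may take $\eps=0$ directly and avoid the limiting argument, which also delivers the ``rank-one'' refinement in the last sentence of the theorem cleanly.

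Your closing paragraph correctly identifies where the finite-dimensional argument breaks down in infinite dimensions and why the paper's \cref{L3.1} is needed: one must replace the single limiting coderivative pair $(x^*,y^*)$ at $(\bx,\by)$ by a sequence of Fr\'echet $\eps$-coderivative pairs at nearby graph points (definition \cref{rga}), and the unit vector $v$ attaining $\|y^*\|$ by an approximate duality pair; the resulting perturbation is then a countable patchwork of localized rank-one pieces rather than a single linear map.
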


$\Rad{R}{lip}$ in \cref{T1.2-1} stands for
the radius of metric regularity of $F$ at $(\bx,\by)\in\gph F$ over the class $\mathcal{F}_{lip}$ of Lipschitz continuous perturbations:
\begin{align}
\label{rad}
\Rad{R}{lip}:=& \inf_{f\in\mathcal{F}_{lip}}\{\lip f(\bx) \mid\;\;
F+f \mbox{ is not metrically regular at } (\bx,\by)\}.
\end{align}
$\Rad{R}{lin}$ corresponds to replacing $\mathcal{F}_{lip}$ in \cref{rad} with the class $\mathcal{F}_{lin}$ of affine perturbations, while $\Rad{sR}{lip}$ and $\Rad{sR}{lin}$ in \cref{T1.2-1a} are defined in a similar way for the property of strong metric regularity (`sR' for brevity).
We assume everywhere without loss of generality that perturbation functions $f$
satisfy
${f(\bx)=0}$.
(Thus, the functions in $\mathcal{F}_{lin}$ are actually linear.)
For a more general definition of the radius $\Rad{Property}{\mathcal{P}}$ allowing for other properties and other classes of perturbations, we refer the readers to \cite{GfrKru}.
\sloppy

The critical inequality $\Rad{R}{lip}\geq\rg$ in \cref{T1.2-1} has its roots in the fundamental theorems of Lyusternik and
Graves, and is sometimes referred to as the \emph{extended Lyusternik--Graves theorem}.
The latter theorem has a long and rather well known history; cf. \cite{DmiMilOsm80,Iof00,DonLewRoc03,DonRoc04,Mor06.1, DmiKru08,DonRoc14,Iof17}.
\if{
 The following version is from \cite[Theorem 3.3]{DonLewRoc03}.

\begin{theorem}[Extended Lyusternik--Graves theorem]
Let $X$ and $Y$ be Banach spaces, $F:X\rightrightarrows Y$, $f:X\to Y$, $(\bx,\by)\in\gph F$, and $\gph F$ be closed near $(\bx,\by)$.
If $\rg >\alpha$, $\lip f(\bx) < \lambda$, and $\la< \alpha$, then
\[{\rm rg\,}(F+f)\big(\bx, \by+f(\bx)\big)>\alpha-\lambda.\]
\end{theorem}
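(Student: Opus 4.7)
The plan is to use a Banach-style iterative construction, which amounts to producing a fixed point of the set-valued map $x\mapsto F^{-1}(y-f(x))$ near a given $x$. Since all the inequalities in the hypothesis are strict, I would first pick auxiliary constants $\alpha^*,\lambda^*$ with $\lip f(\bx)<\lambda^*<\lambda<\alpha<\alpha^*<\rg$, fix a small $\eta>0$ with $(1+\eta)\lambda^*<\alpha^*$, and then choose $\de>0$ so small that (a) the metric regularity estimate $\alpha^* d(x,F^{-1}(y'))\le d(y',F(x))$ holds on $B_\de(\bx)\times B_\de(\by)$, (b) $\lip f$ is bounded by $\lambda^*$ on $B_\de(\bx)$, and (c) $\gph F$ is closed on $B_\de(\bx)\times B_\de(\by)$. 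All iterates produced below will be kept inside these balls by shrinking $\de$ once more at the end.

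Next, given $(x_0,y)\in B_{\de/2}(\bx)\times B_{\de/2}(\by+f(\bx))$ with $r:=d(y,(F+f)(x_0))>0$ (the case $r=0$ is trivial), I would build a sequence $\{x_n\}$ inductively. At step $n$, assuming $y-f(x_{n-1})$ lies near $\by$ and $x_n$ has been produced with $y-f(x_{n-1})\in F(x_n)$ (the base case uses the definition of $r$ to pick $x_1$ via metric regularity so that $\|x_1-x_0\|\le (1+\eta)r/\alpha^*$), I choose $x_{n+1}\in F^{-1}(y-f(x_n))$ using metric regularity applied to $x_n$ and $y-f(x_n)$, so that
\[
\|x_{n+1}-x_n\|\le \frac{1+\eta}{\alpha^*}\,d(y-f(x_n),F(x_n))\le \frac{1+\eta}{\alpha^*}\,\|f(x_n)-f(x_{n-1})\|\le \frac{(1+\eta)\lambda^*}{\alpha^*}\,\|x_n-x_{n-1}\|,
\]
where the middle step uses $y-f(x_{n-1})\in F(x_n)$. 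This gives geometric decay with ratio $q:=(1+\eta)\lambda^*/\alpha^*<1$, hence $\{x_n\}$ is Cauchy and
\[
\|x_n-x_0\|\le \frac{(1+\eta)r}{\alpha^*(1-q)}=\frac{(1+\eta)r}{\alpha^*-(1+\eta)\lambda^*}.
\]
By choosing $\de$ initially small enough relative to $r\le\|y-(\by+f(\bx))\|+d(y,(F+f)(x_0))$-type bounds, all $x_n$ indeed stay in $B_\de(\bx)$, and each $y-f(x_n)$ stays in $B_\de(\by)$, so the inductive step is legitimate.

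Finally, let $\tilde x=\lim x_n$. Then $f(x_n)\to f(\tilde x)$ by continuity of $f$, and since $(x_{n+1},y-f(x_n))\in\gph F$, closedness of $\gph F$ near $(\bx,\by)$ forces $y-f(\tilde x)\in F(\tilde x)$, i.e., $\tilde x\in(F+f)^{-1}(y)$. Passing to the limit and then letting $\eta\downarrow 0$ yields
\[
d\bigl(x_0,(F+f)^{-1}(y)\bigr)\le \|\tilde x-x_0\|\le\frac{r}{\alpha^*-\lambda^*}=\frac{d(y,(F+f)(x_0))}{\alpha^*-\lambda^*},
\]
so $(\alpha^*-\lambda^*)\,d(x_0,(F+f)^{-1}(y))\le d(y,(F+f)(x_0))$ on a neighbourhood of $(\bx,\by+f(\bx))$. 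Since $\alpha^*-\lambda^*>\alpha-\lambda$, this gives ${\rm rg}\,(F+f)(\bx,\by+f(\bx))\ge\alpha^*-\lambda^*>\alpha-\lambda$, as required.

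The main obstacle is the bookkeeping of neighbourhoods: one has to ensure that every iterate $x_n$ and every $y-f(x_n)$ stay inside the regions where metric regularity of $F$, the Lipschitz estimate for $f$, and local closedness of $\gph F$ all apply simultaneously. This is handled by choosing $\de$ so that the total excursion $\sum_n\|x_{n+1}-x_n\|$, bounded by the geometric series above, is a controlled multiple of the input data $\|x_0-\bx\|$ and $\|y-\by-f(\bx)\|$, and then shrinking $\de$ once more if necessary so that this multiple lies within $\de$.
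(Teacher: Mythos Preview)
The paper does not supply its own proof of this statement. The Extended Lyusternik--Graves theorem is quoted from \cite[Theorem~3.3]{DonLewRoc03} (and in fact the displayed statement sits inside an \texttt{\textbackslash if\{...\}\textbackslash fi} block, so it is suppressed in the compiled version); only the corollary recorded as Theorem~\ref{ThBndReg} is actually stated, again without proof and with a reference to the literature. So there is no in-paper argument to compare yours against.

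That said, your argument is the standard iterative Lyusternik--Graves proof and is correct in outline. Two small points deserve a touch more precision. First, the neighbourhood bookkeeping you flag is not automatic: to start the iteration you need $y-f(x_0)\in B_\delta(\by)$, and $\|y-f(x_0)-\by\|\le\|y-\by-f(\bx)\|+\lambda^*\|x_0-\bx\|$, which forces the admissible ball for $x_0$ to be strictly smaller than $\de/2$ when $\lambda^*\ge1$; the same caveat applies to each $y-f(x_n)$. This is routine but should be made explicit. Second, your ``letting $\eta\downarrow0$'' is legitimate because the construction, for each fixed $\eta>0$, yields a point of $(F+f)^{-1}(y)$ within $(1+\eta)r/(\alpha^*-(1+\eta)\lambda^*)$ of $x_0$; since the left-hand side $d(x_0,(F+f)^{-1}(y))$ is independent of $\eta$, taking the infimum of the right-hand side over $\eta>0$ gives the clean bound $r/(\alpha^*-\lambda^*)$. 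With these clarifications the proof goes through and delivers ${\rm rg}\,(F+f)(\bx,\by+f(\bx))\ge\alpha^*-\lambda^*>\alpha-\lambda$.
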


From this theorem one immediately obtains the following relation between the regularity and Lipschitz moduli; cf. \cite[Corollary 2.4]{DonRoc04}, \cite[Theorem~1]{DmiKru08}.
}\fi
The version below relates the regularity moduli of \SVM s and the Lipschitz modulus of the perturbation function; cf. \cite[Corollary 2.4]{DonRoc04}, \cite[Theorem~1]{DmiKru08}.

\begin{theorem}\label{ThBndReg}
Let $X$ and $Y$ be Banach spaces, $F:X\rightrightarrows Y$, $(\bx,\by)\in\gph F$, $\gph F$ be closed near $(\bx,\by)$, and let $f\in\mathcal{F}_{lip}$.
Then
\begin{equation}
\label{EqBndRg}
{\rm rg}\,(F+f)(\bx,\by)\geq \rg -\lip f(\bx).
\end{equation}
\end{theorem}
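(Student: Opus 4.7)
The statement is a known consequence of the extended Lyusternik--Graves machinery, so my plan is to give a direct fixed-point proof by iteration. The inequality is trivial if $\rg\le\lip f(\bx)$, so I assume the right-hand side is positive, fix arbitrary numbers $\lip f(\bx)<\lambda<\alpha<\rg$, and aim at showing that ${\rm rg}\,(F+f)(\bx,\by)\geq\alpha-\lambda$; letting $\alpha\uparrow\rg$ and $\lambda\downarrow\lip f(\bx)$ will then finish the argument. I may assume without loss of generality that $f(\bx)=0$ (otherwise replace $f$ by $f-f(\bx)$ and $\by$ by $\by+f(\bx)$; this does not affect the Lipschitz modulus nor either regularity modulus).

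\textbf{Key steps.} Choose $\delta>0$ small enough that (a) $\gph F\cap(B_{3\delta}(\bx)\times B_{3\delta}(\by))$ is closed, (b) the regularity estimate $d(x,F^{-1}(y))\le\alpha^{-1}d(y,F(x))$ holds for all $(x,y)\in B_{3\delta}(\bx)\times B_{3\delta}(\by)$, and (c) $f$ is $\lambda$-Lipschitz on $B_{3\delta}(\bx)$. Shrink $\delta$ further so that $\delta(1+\lambda)<\delta$ times a convenient constant, ensuring all iterates stay in the required neighbourhoods. For any $x\in B_\delta(\bx)$ and $y\in B_\delta(\by)$, I construct inductively a sequence $(x_k)$ with $x_0:=x$ such that $x_{k+1}\in F^{-1}(y-f(x_k))$ and
\[
d(x_k,x_{k+1})\le\frac{1}{\alpha}d\bigl(y-f(x_k),F(x_k)\bigr)+2^{-k}\eps,
\]
using the definition of the infimum in the regularity modulus (or the usual $\eps$-projection trick). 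Since $y-f(x_k)\in F(x_{k+1})$, the value $d(y-f(x_{k+1}),F(x_{k+1}))$ is bounded by $d(f(x_k),f(x_{k+1}))\le\lambda\,d(x_k,x_{k+1})$, which yields the contraction-type estimate $d(x_{k+1},x_{k+2})\le(\lambda/\alpha)\,d(x_k,x_{k+1})+2^{-(k+1)}\eps$. A geometric-series summation then shows that $(x_k)$ is Cauchy with total displacement bounded by $\frac{1}{\alpha-\lambda}d(y-f(x_0),F(x_0))$ plus $O(\eps)$, and keeps every iterate inside $B_{3\delta}(\bx)$ provided $x_0,y$ are sufficiently close to $(\bx,\by)$.

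\textbf{Passage to the limit.} Let $x^*:=\lim_k x_k$. By the closedness of $\gph F$ and the continuity of $f$, $y-f(x^*)\in F(x^*)$, i.e.\ $x^*\in(F+f)^{-1}(y)$. Since $d(y-f(x_0),F(x_0))=d(y,(F+f)(x_0))$, letting $\eps\downarrow0$ gives
\[
d\bigl(x,(F+f)^{-1}(y)\bigr)\le\frac{1}{\alpha-\lambda}\,d\bigl(y,(F+f)(x)\bigr)
\]
for all $(x,y)$ in a neighbourhood of $(\bx,\by)$, i.e.\ ${\rm rg}\,(F+f)(\bx,\by)\geq\alpha-\lambda$. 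This gives the desired bound \cref{EqBndRg}.

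\textbf{Main obstacle.} The only nonroutine aspect is bookkeeping the neighbourhoods to make sure every iterate $x_k$ remains inside the region where both the regularity estimate for $F$ and the Lipschitz estimate for $f$ apply. This is handled by requiring the starting data $(x,y)$ to lie in a ball whose radius is an appropriate fraction of $\delta$ depending on $\alpha$, $\lambda$, and a bound for $d(y-f(x_0),F(x_0))$; the geometric-series bound above shows that the total displacement is controlled by a constant multiple of $\|x-\bx\|+\|y-\by\|$, so shrinking this initial ball accordingly keeps the iteration inside $B_{3\delta}(\bx)\times B_{3\delta}(\by)$. An equivalent route is to invoke Ekeland's variational principle applied to $x\mapsto d(y-f(x),F(x))$, but the iterative construction above is cleanest and avoids any appeal beyond the definition of the regularity modulus.
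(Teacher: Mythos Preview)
The paper does not prove this theorem at all: it is stated as a known consequence of the extended Lyusternik--Graves theorem, with references to \cite[Corollary 2.4]{DonRoc04} and \cite[Theorem~1]{DmiKru08}, and is then used as a black box in the proofs of \cref{T3.1,T3.2}. Your iterative argument is precisely the classical Lyusternik--Graves iteration underlying those cited results, and the outline is correct; the only work you have left implicit is the routine verification that the iterates $x_k$ and the targets $y-f(x_k)$ remain in $B_{3\delta}(\bx)\times B_{3\delta}(\by)$, which follows from the geometric-series bound together with $\|f(x_k)\|\le\lambda\|x_k-\bx\|$ once the initial ball is chosen small enough.
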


There have been several attempts to study stability of metric regularity with respect to set-valued perturbations under certain assumptions either of sum-stability or on ways of measuring the distance between \SVM s \cite{Iof00,Iof01,NgaThe08,NgaTroThe14, AdlCibNga15,HeXu22}.

In finite dimensions, \cref{T1.2} gives exact formulas for the radii of the two regularity properties from \cref{D1.1} in terms of the regularity modulus, while in general Banach spaces it provides lower bounds for the radii and hence, sufficient conditions for the stability of the properties.
This observation naturally raises the  following important question:
\begin{enumerate}
\item[(A)]
{\em Can any of the inequalities in \cref{T1.2-1,T1.2-1a} hold as equalities in infinite dimensions?}
\end{enumerate}
Closely related
is the following question posed by Ioffe \cite{Iof03}:
\begin{enumerate}
\item[(B)]
{\em Is the bound \eqref{EqBndRg} sharp?
In other words, in the setting of \cref{ThBndReg}, if ${\rg<+\infty}$ and $r\in[0,\rg]$, is there a function $f\in\mathcal{F}_{lip}$ such that $\lip f(\bx)=r$ and ${\rm rg}\,(F+f)(\bx,\by)= \rg -r$?}
\sloppy
\end{enumerate}
Note that, from a positive answer to the latter question, the equality $\Rad{R}{lip}=\rg$ follows by taking $r=\rg$.

A partial positive answer to question (A) was given already in
\cite{DonLewRoc03} (see also \cite[Theorems~6A.2]{DonRoc14}): equalities hold in \cref{T1.2-1} when $F$ is positively homogeneous. For the special case when $Y$ is finite dimensional, equalities in \cref{T1.2-1} were shown for a certain class of mappings in \cite{Mor04}.
Next, equalities in \cref{T1.2-1} were established in \cite{CanDonLopPar05} for a special mapping defined by a semi-infinite
system of linear equalities and inequalities.
However, this is not the case for general \SVM s: as shown in Ioffe \cite{Iof03.2} (see also \cite[Theorem~5.61]{Iof17}), the inequality $\Rad{R}{lin}\geq\rg$ in \cref{T1.2-1} can be strict even when $X=Y$ is a Hilbert space, and $F$ is a single-valued function having reasonably good differentiability properties.

Question (B) was positively answered by Ioffe \cite{Iof03} for general set-valued mappings in the finite dimensional setting and for single-valued continuous functions from a metric space $X$ into a Banach space $Y$. With respect to the latter result, Ioffe  stated that ``\emph{The question of whether or not a similar fact is
valid for set-valued mappings remains open}.''

To the best of our knowledge, there has been no further progress in addressing the two questions stated above.
In the current note we make another step to close the gap.
We show that in Asplund spaces
the bound \eqref{EqBndRg} is sharp for general closed graph set-valued mappings, thus, giving a positive answer to question (B) and showing the equality $\Rad{R}{lip}=\rg$.
We also obtain the relation $\Rad{sR}{lip}=\rg$
in the case when $F$ is strongly metrically regular. In the non-Asplund case, we provide natural upper bounds for the radius complementing the conventional lower bound in \cref{T1.2-1,T1.2-1a}.

In the aforementioned paper \cite{DonRoc04} by Dontchev and Rockafellar, besides metric regularity and strong metric regularity, the properties of metric subregularity and strong metric subregularity were considered.
It was shown that the radius of strong metric subregularity (under calm perturbations) in finite dimensions follows the same pattern as that of (strong) metric regularity, i.e., it equals the modulus of metric subregularity.
However, the radius of (not strong) metric subregularity fails to satisfy the paradigm promoted in \cite{DonLewRoc03, DonRoc04}, and the property requires new approaches.
In the recent papers \cite{DonGfrKruOut20,GfrKru}, the radius of metric subregularity has been analyzed for various classes of perturbations in finite and infinite dimensions.
Lower and upper bounds for the radius have been established which are different from the modulus of metric subregularity, and can
differ from each other by a factor of at most two.
The radius of strong metric subregularity has also been examined in \cite{GfrKru}, and the corresponding result in \cite{DonRoc04} has been extended to  infinite dimensions.

After some preliminaries in the next \cref{Pre}, the main results are formulated in \cref{S3}. \cref{T3.1} states that the estimate \eqref{EqBndRg} is precise in the Asplund space setting.
\cref{T3.2} provides the missing equality $\Rad{R}{lip}=\rg$ in the Asplund space setting, and combines it with the other estimates for the radius.
The main tools used in the proofs of these theorems are encapsulated in a separate \cref{L3.1}. It gives a little more general relations which can be of independent interest.
The proof of \cref{L3.1} makes a separate \cref{S4}. It is partially based on our recent work on the radius of (strong) metric subregularity \cite{DonGfrKruOut20,GfrKru}.

\section{Preliminaries}
\label{Pre}

The note follows the style and (rather self-explanatory) notation of \cite{GfrKru}.
$X$ and $Y$ are normed spaces.
In most statements, they are additionally assumed to be Banach or even Asplund.
Their topological duals are denoted by $X^*$ and $Y^*$, respectively, while $\langle\cdot,\cdot\rangle$ denotes the bilinear form defining the pairing between the spaces.
Recall that a Banach space is \emph{Asplund} if every continuous convex function on an open convex set is Fr\'echet differentiable at all points of a dense subset of its domain, or equivalently, if the dual of each
separable subspace is separable \cite{Phe93}.
All reflexive, particularly, all finite dimensional Banach spaces are Asplund.

The \emph{open} unit balls in a normed space and its dual are denoted by $\B$ and $\B^*$, respectively, while $\Sp$ and $\Sp^*$ stand for the unit spheres (possibly with a subscript denoting the space).
$B_\de(x)$ and $\overline B_\de(x)$ denote, respectively, the
{open} and closed
balls with radius $\de>0$ and centre $x$.
Norms and distances (including point-to-set distances) in all spaces are denoted by the same symbols
$\|\cdot\|$ and $d(\cdot,\cdot)$, respectively.
A subset $\Omega\subset X$ is said to be closed \emph{near a point} $\bx\in\Omega$ if $\Omega\cap U$ is closed for some closed \nbh\ $U$ of $\bx$.
Symbols $\R$, $\R_+$ and $\N$ denote the sets of all real numbers, all nonnegative real numbers and all positive integers, respectively, and $\R_\infty:=\R\cup\{+\infty\}$.
We use the following conventions:
$\inf\es_{\R}=+\infty$ and
$\sup\es_{\R_+}=0$, where $\es$ (possibly with a subscript) denotes the empty subset (of a given set).

Products of primal and dual normed spaces are assumed to be equipped with the sum and maximum norms, respectively:
\begin{gather*}
\norm{(x,y)}=\norm{x}+\norm{y}, \quad (x,y)\in X\times Y,
\\
\notag
\norm{(x^*,y^*)}=\max\{\norm{x^*},\norm{y^*}\}, \quad (x^*,y^*)\in X^*\times Y^*.
\end{gather*}

Given a subset $\Omega\subset X$, a point $\bx\in\Omega$, and a number $\eps>0$, the set
\begin{gather}
\label{nc}
N_{\Omega,\eps}(\bx):=\left\{x^*\in X^*\,\Big |\,\limsup_{\Omega\ni x\to\bx,\; x\ne\bx}\frac{\ang{x^*,x-\bx}}{\norm{x-\bx}}< \eps\right\}
\end{gather}
is called the {\em set of \Fr\ $\eps$-normals} to $\Omega$ at $\bx$,
while $N_\Omega(\bx)= \bigcap_{\epsilon>0}N_{\Omega,\epsilon}(\bx)$ is the {\em \Fr\ normal cone} to $\Omega$ at $\bx$.

The graph of a \SVM\
$F:X\rightrightarrows Y$ is defined as
$\gph F:= \{ (x,y)\in X\times Y\mid y \in F(x) \}$.
Given a point $(\bx,\by)\in\gph F$, and a number $\eps\ge0$, the mapping $D^*_\eps F(\bx,\by):Y^*\toto X^*$ defined for all $y^*\in Y^*$ by
\begin{gather}
\label{cd}
D^*_\eps F(\bx,\by)(y^*):=\{x^*\in X^*\mid (x^*,-y^*)\in N_{\gph F,\eps}(\bx,\by)\},
\end{gather}
is called the {\em \Fr\ $\eps$-coderivative}
of $F$ at $(\bx,\by)$.
If $\eps=0$, it reduces to the {\em \Fr\ coderivative} ${D}^*F(\bx,\by)$
(with the convention in \eqref{cd} that $N_{\gph F,0}(\bx,\by):=N_{\gph F}(\bx,\by)$).

\begin{remark}
The set of \Fr\ $\eps$-normals is often defined \cite{Kru81.1} with the non-strict inequality in \eqref{nc}.
Then it allows also for the case $\eps=0$ in \cref{nc} directly.
Whether the strict or non-strict inequality is used in definition \eqref{nc} does not affect definition \eqref{rga} and the estimates in the current paper, but, as observed by a reviewer, using the strict inequality in \eqref{nc} leads to slight simplifications in some proofs.
\end{remark}

Employing \cref{cd}, we define another nonnegative quantity characterizing the behaviour of $F$ near $(\bx,\by)$ and closely related to the regularity modulus $\rg$:
\begin{gather}
\label{rga}
\rga:=\liminf_{\substack{\gph F\ni(x,y)\to(\bx,\by),\;\eps\downarrow0\\ x^*\in{D}_\eps^*F(x,y)(S_{Y^*})}}\|x^*\|
=\sup_{\eps>0}\;\inf_{\substack{(x,y)\in\gph F\cap B_\eps(\bx,\by)\\ x^*\in{D}_\eps^*F(x,y)(S_{Y^*})}}\|x^*\|.
\end{gather}
Observe that
\begin{gather}
\label{rgb}
\rga\le\liminf_{\substack{\gph F\ni(x,y)\to(\bx,\by)\\ x^*\in{D}^*F(x,y)(S_{Y^*})}}\|x^*\|.
\end{gather}

\begin{lemma}
\label{Lem1}
Let $X$ and $Y$ be Banach spaces,
$F:X\rightrightarrows Y$,
and $(\bx,\by)\in\gph F$.
Then $\rg\le\rga$.

If $X$ and $Y$ are Asplund, and $\gph F$ is closed near $(\bx,\by)$, then $\rg=\rga$.
\end{lemma}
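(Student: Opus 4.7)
My plan is to establish the two inequalities in \cref{Lem1} separately. The first, $\rg\le\rga$, holds in any Banach space by a direct variational manipulation of the definitions. The reverse inequality in the Asplund setting is an instance of the coderivative sufficient condition for metric regularity (a Mordukhovich-type criterion), whose standard proof combines Ekeland's variational principle with the fuzzy sum rule characterizing Asplund spaces.

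For $\rg\le\rga$, I would fix $\alpha\in(0,\rg)$ and show $\rga\ge\alpha$. Given small $\eps,\eta>0$, take any $(x,y)\in\gph F$ near $(\bx,\by)$, any $y^*\in\Sp_{Y^*}$, and any $x^*\in D^*_\eps F(x,y)(y^*)$. Pick $v\in\Sp_Y$ with $\ang{y^*,v}>1-\eta$. Metric regularity with constant $\alpha$ produces, for each sufficiently small $t>0$, a point $x_t\in F^{-1}(y-tv)$ satisfying $\norm{x_t-x}\le t/\alpha+\eta t$. Substituting $(x_t,y-tv)\in\gph F$ into the defining inequality in \cref{nc} for the $\eps$-normal $(x^*,-y^*)$, dividing by $t$, and passing to the limits $t\downarrow0$, $\eta\downarrow0$, and $\eps\downarrow0$ in turn yields $\norm{x^*}\ge\alpha$. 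Hence $\rga\ge\alpha$, and letting $\alpha\uparrow\rg$ completes the first claim.

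For the Asplund direction, I would fix $\alpha'<\alpha<\rga$ and show $\rg\ge\alpha'$ by contradiction. If metric regularity with constant $\alpha'$ fails on every neighbourhood of $(\bx,\by)$, there exist sequences $(x_n,y_n)\to(\bx,\by)$ with $\alpha'\,d(x_n,F^{-1}(y_n))>d(y_n,F(x_n))>0$. Applying Ekeland's variational principle to the lower semicontinuous function $x\mapsto d(y_n,F(x))$ on a closed ball around $x_n$ (the closedness of $\gph F$ near $(\bx,\by)$ supplies the required lower semicontinuity) produces a perturbed point $x_n'\to\bx$ at which this function plus $\alpha'\norm{\cdot-x_n'}$ attains a local minimum while staying strictly positive. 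Selecting $y_n'\in F(x_n')$ nearly realizing $d(y_n,F(x_n'))$ converts this into a local minimum of $(x,y)\mapsto\norm{y-y_n}+\alpha'\norm{x-x_n'}$ subject to $(x,y)\in\gph F$ at $(x_n',y_n')$. The fuzzy sum rule in Asplund spaces then delivers a point $(\tilde x_n,\tilde y_n)\in\gph F$ arbitrarily close to $(x_n',y_n')$ and a Fr\'echet normal $(x_n^*,-y_n^*)\in N_{\gph F}(\tilde x_n,\tilde y_n)\subset N_{\gph F,\eps_n}(\tilde x_n,\tilde y_n)$ with $\norm{y_n^*}$ arbitrarily close to $1$ and $\norm{x_n^*}\le\alpha'+\eps_n$. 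After rescaling $y_n^*$ onto $\Sp_{Y^*}$, this exhibits $x_n^*\in D^*_{\eps_n}F(\tilde x_n,\tilde y_n)(\Sp_{Y^*})$ with $\norm{x_n^*}<\alpha$ for all large $n$, contradicting the definition of $\rga>\alpha$ in \cref{rga}.

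The chief obstacle is the Asplund step: one must tune the Ekeland radius, the fuzziness parameter of the sum rule, and the residual errors in the $\eps_n$-normal estimate simultaneously so that $\eps_n\downarrow0$, the produced points remain on $\gph F$ and inside arbitrarily small neighbourhoods of $(\bx,\by)$, and the bound $\norm{x_n^*}<\alpha$ survives the normalization of $y_n^*$. This bookkeeping is essentially the content of the infinite-dimensional Mordukhovich coderivative criterion; once it is carried out, both halves of $\rg=\rga$ are in place.
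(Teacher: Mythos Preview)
Your proposal is correct and follows essentially the same approach as the paper. For the inequality $\rg\le\rga$, your direct variational manipulation (perturbing $y$ in a direction nearly dual to $y^*$, lifting back to $\gph F$ via metric regularity, and reading off a lower bound on $\|x^*\|$ from the $\eps$-normal inequality) is exactly the argument the paper carries out, only with a sequence $v_k\to y$ in place of your parameter $t\downarrow0$. For the Asplund equality, the paper simply invokes \cite[Theorem~4.5]{Mor06.1} together with the sandwich inequality \cref{rgb}, whereas you sketch the Ekeland-plus-fuzzy-sum-rule argument that underlies that cited criterion; your outline is accurate and your remarks about the bookkeeping are apt, but this is an expansion of the same standard route rather than a different one.
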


\begin{proof}
If $\rg=0$ or $\rga=+\infty$, then we trivially have $\rg\le\rga$.
Thus, we may assume that $\rg>0$ and $\rga<\infty$. Pick any $\al\in(0,\rg)$ and $\be\in(\rga,+\infty)$.
Then condition \cref{D1.5-1} is satisfied for some $\de>0$, and, given an arbitrary number $\xi\in(0,\de)$, there exist $(x,y)\in\gph F\cap B_\xi(\bx,\by)$, $y^*\in S_{Y^*}$, and $x^*\in{D}_{\xi}^*F(x,y)(y^*)$ such that $\|x^*\|<\be$.
Consider a sequence $\{v_{k}\}$ in $Y$ such that $y\ne v_{k}\to y$ and $\ang{y^*,\frac{v_{k}-y}{\|v_{k}-y\|}}\to-1$ as $k\to\infty$.
Then, for each sufficiently large $k\in\N$, we have $v_{k}\in B_\de(\by)$ and,
by \cref{D1.5-1}, one can find a $u_{k}\in F\iv(v_k)$ such that
$\al\|u_{k}-x\|<(1+1/k)\|v_{k}-y\|,$
and consequently,
$$\|v_{k}-y\|\le\|(u_{k},v_{k})-(x,y)\|\le ((1+1/k)\al\iv+1)\|v_{k}-y\|.$$
By definition of the \Fr\ $\eps$-coderivative, using the notation $\mu_+:=\max\{\mu,0\}$, we obtain:
\begin{align*}
\xi&>\limsup_{k\to\infty} \frac{\left(\ang{x^*,u_k-x}-\ang{y^*,v_k-y}\right)_+} {\|(u_k,v_k)-(x,y)\|}\ge \limsup_{k\to\infty} \frac{\left(\|v_k-y\|-\|x^*\|\|u_k-x\|\right)_+} {((1+1/k)\al\iv+1)\|v_{k}-y\|}
\\&\ge
\limsup_{k\to\infty} \frac{1-\|x^*\|(1+1/k)\al\iv} {(1+1/k)\al\iv+1}= \frac{1-\|x^*\|\al\iv} {\al\iv+1}> \frac{\al-\be} {\al+1}.
\end{align*}
Since $\xi>0$ can be arbitrarily small, it follows that $\al\le\be$.
Letting $\al\uparrow\rg$ and $\be\downarrow\rga$, we arrive at $\rg\le\rga$.
The equality $\rg=\rga$ in the Asplund space setting is a consequence of \cite[Theorem~4.5]{Mor06.1} and \cref{rgb}.
\qed\end{proof}

\begin{remark}
The above proof of inequality $\rg\le\rga$ is a modification of the corresponding parts of the proofs of \cite[Theorem~3.1 and Theorem~5.1\,(i)]{Kru88}.
It can also be deduced from \cite[Theorem~1.43(i)]{Mor06.1}.
In view of \cite[Theorem~4.5]{Mor06.1}, it follows from \cref{Lem1} that
\cref{rgb} becomes equality if the spaces are Asplund.
\end{remark}

The next statement from \cite{GfrKru}
extends \cite[Theorem 1.62\,(i)]{Mor06.1} which addresses the case $\eps=0$.

\begin{lemma}
\label{T2.3}
Let
$F:X\rightrightarrows Y$,
$f:X\to Y$,
$(\bx,\by)\in\gph F$, and $\eps\ge0$.
Suppose that $f$ is \Fr\ differentiable at $\bx$.
Set $\eps_1:=(\|\nabla f(\bx)\|+1)\iv\eps$ and $\eps_2:=(\|\nabla f(\bx)\|+1)\eps$.
Then, for all $y^*\in Y^*$, it holds
\begin{gather*}
D^*_{\eps_1} F(\bx,\by)(y^*)\subset
D^*_\eps(F+f)(\bx,\by+f(\bx))(y^*)-\nabla f(\bx)^*y^*\subset
D^*_{\eps_2} F(\bx,\by)(y^*).
\end{gather*}
\end{lemma}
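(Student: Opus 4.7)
The plan is to exploit the shear homeomorphism $\Phi:(x,y)\mapsto(x,y+f(x))$, which maps $\gph F$ bijectively onto $\gph(F+f)$ and sends $(\bx,\by)$ to $(\bx,\by+f(\bx))$, and to transfer the $\eps$-normal cone condition from one graph to the other by a direct calculation with the defining limsup in~\eqref{nc}. For the first inclusion I would take $x^*\in D^*_{\eps_1}F(\bx,\by)(y^*)$ and aim to show $x^*+\nabla f(\bx)^*y^*\in D^*_\eps(F+f)(\bx,\by+f(\bx))(y^*)$. The second inclusion I would then deduce from the first applied to the mapping $F+f$ with the perturbation $-f$, which is again \Fr\ differentiable at $\bx$ with $\norm{\nabla(-f)(\bx)}=\norm{\nabla f(\bx)}$: since $(F+f)+(-f)=F$, the roles of $\eps_1$ and $\eps_2$ swap and produce exactly the second inclusion.

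For the first inclusion, given any $(u,v)\in\gph(F+f)$ near $(\bx,\by+f(\bx))$, I set $y:=v-f(u)$, so that $(u,y)\in\gph F$ and, by continuity of $f$, $(u,y)\to(\bx,\by)$ whenever $(u,v)\to(\bx,\by+f(\bx))$. A routine rearrangement using $\ang{\nabla f(\bx)^*y^*,u-\bx}=\ang{y^*,\nabla f(\bx)(u-\bx)}$ yields the identity
\begin{equation*}
\ang{x^*+\nabla f(\bx)^*y^*,u-\bx}-\ang{y^*,v-\by-f(\bx)} =\ang{x^*,u-\bx}-\ang{y^*,y-\by}-\ang{y^*,r(u)},
\end{equation*}
where $r(u):=f(u)-f(\bx)-\nabla f(\bx)(u-\bx)=o(\norm{u-\bx})$ by \Fr\ differentiability. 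Fixing $\eta>0$, \Fr\ differentiability also gives $\norm{f(u)-f(\bx)}\le(\norm{\nabla f(\bx)}+\eta)\norm{u-\bx}$ on some \nbh\ of $\bx$, from which the straightforward estimate
\begin{equation*}
\norm{(u,y)-(\bx,\by)}\le(\norm{\nabla f(\bx)}+1+\eta)\,\norm{(u,v)-(\bx,\by+f(\bx))}
\end{equation*}
follows, while trivially $\norm{u-\bx}\le\norm{(u,v)-(\bx,\by+f(\bx))}$.

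Dividing the displayed identity by $\norm{(u,v)-(\bx,\by+f(\bx))}$ and passing to the upper limit along $\gph(F+f)$, the hypothesis $(x^*,-y^*)\in N_{\gph F,\eps_1}(\bx,\by)$---that is, the corresponding limsup along $\gph F$ equals some $L<\eps_1$---gets amplified by at most the factor $\norm{\nabla f(\bx)}+1+\eta$, while the $\ang{y^*,r(u)}$ contribution is bounded by $\norm{y^*}\eta$ in the limit because $\norm{u-\bx}$ is dominated by the denominator. Since $\eps_1(\norm{\nabla f(\bx)}+1)=\eps$ and $L<\eps_1$ strictly, I can choose $\eta$ small enough so that $L(\norm{\nabla f(\bx)}+1+\eta)+\norm{y^*}\eta<\eps$, which delivers $(x^*+\nabla f(\bx)^*y^*,-y^*)\in N_{\gph(F+f),\eps}(\bx,\by+f(\bx))$, as required. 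The main obstacle is the bookkeeping of strict versus non-strict inequalities across the factor $\norm{\nabla f(\bx)}+1$; once it is handled, the rest is mechanical, and the lemma should be viewed as the quantitative $\eps$-refinement of the classical smooth coderivative sum rule for $\eps=0$, with the factors $(\norm{\nabla f(\bx)}+1)^{\pm1}$ appearing precisely to compensate for the distortion of the graph norm under $\Phi$.
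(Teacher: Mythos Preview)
Your argument is correct. Note, however, that the paper does not prove this lemma at all: it is quoted from \cite{GfrKru} (see the sentence preceding the lemma), so there is no ``paper's own proof'' to compare against. What you have written is precisely the expected proof---the quantitative $\eps$-version of the smooth coderivative sum rule \cite[Theorem~1.62\,(i)]{Mor06.1}---carried out via the shear homeomorphism $(x,y)\mapsto(x,y+f(x))$ between $\gph F$ and $\gph(F+f)$, with the Lipschitz-type bound $\norm{(u,y)-(\bx,\by)}\le(\norm{\nabla f(\bx)}+1+\eta)\norm{(u,v)-(\bx,\by+f(\bx))}$ accounting for the distortion factor $(\norm{\nabla f(\bx)}+1)^{\pm1}$.

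Two minor remarks. First, your handling of the $\ang{y^*,r(u)}$ term is slightly over-cautious: since $r(u)=o(\norm{u-\bx})$ and $\norm{u-\bx}\le\norm{(u,v)-(\bx,\by+f(\bx))}$, this contribution actually tends to~$0$, so the extra $\norm{y^*}\eta$ is unnecessary (though harmless). Second, in the boundary case $\eps=0$ (hence $\eps_1=0$) the paper's convention is $N_{\gph F,0}:=N_{\gph F}$, for which the defining $\limsup$ is $\le0$ rather than $<0$; your estimate still gives $\limsup\le0$ on the $(F+f)$ side, so the case is covered. The reduction of the second inclusion to the first via $(F+f,-f)$ is clean and correct.
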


\section{Radius of metric regularity}
\label{S3}

We start this section with a lemma which constitutes a key ingredient for the proofs of our main results.
Recall from \cite[p.~552] {Iof03} that a function $f:X\to Y$ between Banach spaces
is {\em Lipschitz rank one} on an open subset $U\subset X$ if, for any $x\in U$, there is a neighborhood $U_{x}\subset U$ of $x$, on which $f$ can be represented  in the form
\[f(u)=\xi(u)y\quad (u\in U_{x}),\]
where $\xi:U_{x}\to\R$ is Lipschitz continuous and $y\in Y$.

\begin{lemma}
\label{L3.1}
Let $X$ and $Y$ be Banach spaces, $F:X\rightrightarrows Y$, $(\bx,\by)\in\gph F$, and $\gph F$ be closed near $(\bx,\by)$.
Suppose that $\rga<+\infty$.
Then there exists a function $f\in\mathcal{F}_{lip}$, Lipschitz rank one on $X\setminus\{\bx\}$, such that
$\lip f(\bx)\leq\rga$, and
$${\rm rg}^+(F+\alpha f)(\bx,\by)\leq(1-\alpha)\rga
\qdtx{for all}\alpha\in[0,1];$$
in particular, ${\rm rg}^+(F+f)(\bx,\by)=0$.
\end{lemma}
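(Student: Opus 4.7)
The plan is to extract from the definition \cref{rga} of $\rga$ a sequence $(x_k,y_k)\to(\bx,\by)$ in $\gph F$ together with coderivative data $(y^*_k,x^*_k)$ approaching the infimum $\be:=\rga$, and to build $f$ as a sum of disjointly supported rank-one ``bumps'' $\xi_k v_k$ near the $x_k$ whose derivative at $x_k$ reproduces $x^*_k$ in a direction $v_k$ nearly dual to $-y^*_k$ (so $\langle y^*_k,v_k\rangle\to-1$). The $\eps$-coderivative sum rule in \cref{T2.3} will then exhibit $(1+\al\langle y^*_k,v_k\rangle)x^*_k$ as an $\eps$-coderivative element of $F+\al f$ at $y^*_k$, whose norm tends to $(1-\al)\be$, which via \cref{rga} yields the desired estimate.

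First I would dispatch $\be=0$ with $f\equiv 0$ and henceforth assume $\be>0$. From \cref{rga} I would choose $(x_k,y_k)\in(\gph F\cap B_{\de_k}(\bx,\by))\setminus\{(\bx,\by)\}$, $\eps_k\downarrow 0$, $y^*_k\in S_{Y^*}$, and $x^*_k\in D^*_{\eps_k}F(x_k,y_k)(y^*_k)$ with $\|x^*_k\|\le\be+1/k$, the positive $\de_k\downarrow 0$ chosen so rapidly that pairwise disjoint balls $B_{2r_k}(x_k)$ (with the radii $r_k$ to be fixed below) exist and avoid $\bx$. Duality in $Y$ provides unit vectors $v_k\in Y$ with $\langle y^*_k,v_k\rangle\le -1+1/k$.

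Next I would build $\xi_k:X\to\R$ as a Lipschitz truncation of the linear functional $x\mapsto\langle x^*_k,x-x_k\rangle$, agreeing with this functional on $B_{r_k}(x_k)$, vanishing outside $B_{2r_k}(x_k)$, with global Lipschitz constant at most $\|x^*_k\|+1/k$; this is achievable for $r_k$ small enough since the truncation needs to absorb only a total variation of order $r_k\|x^*_k\|$. Setting $f(x):=\sum_{k}\xi_k(x)v_k$ makes the sum locally finite by disjointness of supports; one has $f(\bx)=0$, and the local Lipschitz budget gives $\lip f(\bx)\le\lim_{K\to\infty}\sup_{k\ge K}(\|x^*_k\|+1/k)=\be$. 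On each $B_{2r_k}(x_k)$, $f=\xi_k v_k$ is rank one with direction $v_k$, and off $\bigcup_k B_{2r_k}(x_k)$, $f\equiv 0$ is trivially rank one; hence $f$ is rank one on $X\setminus\{\bx\}$.

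Finally, $f$ coincides with $x\mapsto\langle x^*_k,x-x_k\rangle v_k$ on $B_{r_k}(x_k)$, hence is \Fr\ differentiable at $x_k$ with $\nabla f(x_k):u\mapsto\langle x^*_k,u\rangle v_k$, giving $\nabla f(x_k)^* y^*_k=\langle y^*_k,v_k\rangle x^*_k$; since $\xi_k(x_k)=0$, the point $(x_k,y_k)$ lies in $\gph(F+\al f)$. Invoking \cref{T2.3} for the perturbation $\al f$ at $(x_k,y_k)$ yields a sequence $\tilde\eps_k\downarrow 0$ with
\[(1+\al\langle y^*_k,v_k\rangle)x^*_k=x^*_k+\al\nabla f(x_k)^* y^*_k\in D^*_{\tilde\eps_k}(F+\al f)(x_k,y_k)(y^*_k).\]
Taking norms, using $\|x^*_k\|\to\be$ and $\langle y^*_k,v_k\rangle\to-1$, and feeding the resulting sequence into \cref{rga} applied to $F+\al f$ gives ${\rm rg}^+(F+\al f)(\bx,\by)\le(1-\al)\be$; the in-particular clause is the case $\al=1$. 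The main technical point is the joint calibration of the truncation radii $r_k$ with the coderivative extraction: the supports must be pairwise disjoint and bounded away from $\bx$, yet admit the linear-functional truncation within the Lipschitz budget $\|x^*_k\|+1/k$. Both constraints are met by choosing $\de_k\downarrow 0$ sufficiently fast in the extraction step.
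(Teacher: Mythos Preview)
Your overall architecture—extracting coderivative data from \cref{rga}, building disjointly supported rank-one bumps whose derivatives at $x_k$ reproduce $x^*_k$ along directions $v_k$ nearly anti-dual to $y^*_k$, and invoking \cref{T2.3}—is exactly the paper's Case~1. However, there is a genuine gap: your extraction step presupposes that the balls $B_{2r_k}(x_k)$ can be made pairwise disjoint and bounded away from $\bx$, which forces $x_k\ne\bx$ for all $k$. Nothing in definition \cref{rga} guarantees this; the infimum may well be approached only along sequences with $x_k=\bx$ (and $y_k\ne\by$, or even $(x_k,y_k)=(\bx,\by)$ with only $\eps_k\downarrow0$). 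Writing $(x_k,y_k)\ne(\bx,\by)$ does not help, since $x_k=\bx$, $y_k\ne\by$ is still allowed. In that situation your bump construction collapses: all bumps would be centred at $\bx$ and could neither be disjoint nor avoid $\bx$.

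The paper devotes its entire Case~2 to this obstruction. Assuming $x_k=\bx$ for all $k$, it first shows (Claim~3) that for all large $k$ there exist graph points near $(\bx,y_k)$ with $x\ne\bx$ (otherwise $0\in D^*_{\eps_k}F(\bx,y_k)(y^*_k)$, contradicting $\rga>0$), then introduces an auxiliary monotone function $\varphi_k$ measuring the $\eps$-normal defect over such points and proves $\varsigma_k:=\inf\varphi_k\to0$ (Claim~4). Finally it applies Ekeland's variational principle to $\psi_k(x,y)=\eps_k\|(x,y)-(\bx,y_k)\|-\langle x^*_k,x-\bx\rangle+\langle y^*_k,y-y_k\rangle$ on $\gph F\cap\overline B_{\rho_k}(\bx,y_k)$ to produce new base points $(\tilde x_k,\tilde y_k)\in\gph F$ with $\tilde x_k\ne\bx$ and $x^*_k\in D^*_{\tilde\eps_k}F(\tilde x_k,\tilde y_k)(y^*_k)$ for some $\tilde\eps_k\downarrow0$, thereby reducing to Case~1. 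This reduction is the missing ingredient in your argument. A secondary point: your assertion that a truncation equal to $\langle x^*_k,\cdot-x_k\rangle$ on $B_{r_k}(x_k)$ and vanishing outside $B_{2r_k}(x_k)$ can have Lipschitz constant $\|x^*_k\|+1/k$ is plausible but not justified (a naive linear cutoff gives roughly $2\|x^*_k\|$); the paper achieves the analogous bound $(1+1/k)\|x^*_k\|$ via the specific bump $s_k(x)=\max\{1-(\|x-x_k\|/\rho_k)^{1+1/k},0\}$ together with a four-case verification (Claim~1) that the assembled $f$ is Lipschitz with this modulus near~$\bx$.
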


The proof of \cref{L3.1} is given in the next section.
We are now in a position to extend \cite[Theorem 4.1]{Iof03} to set-valued mappings and, thus, to answer positively question (B) in \cref{intro} in the Asplund space setting.

\begin{theorem}
\label{T3.1}
Let $X$ and $Y$ be Asplund spaces, $F:X\rightrightarrows Y$, $(\bx,\by)\in\gph F$, and $\gph F$ be closed near $(\bx,\by)$.
If $\rg<+\infty$, then, for every real $r\in [0,\rg]$, there is a function $f\in\mathcal{F}_{lip}$, Lipschitz rank one on $X\setminus\{\bx\}$, such that $\lip f(\bx)=r$ and
  \[{\rm rg\,}(F+f)(\bx,\by)=\rg-r.\]
\end{theorem}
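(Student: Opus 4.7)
The plan is to derive the theorem by combining Lemma~\ref{L3.1} with Lemma~\ref{Lem1} and the extended Lyusternik--Graves-type bound in Theorem~\ref{ThBndReg}, and then rescale. Since $X$ and $Y$ are Asplund and $\gph F$ is closed near $(\bx,\by)$, Lemma~\ref{Lem1} gives
$\rho := \rg = \rga$, which is finite by assumption. Lemma~\ref{L3.1} then furnishes a function $f\in\mathcal{F}_{lip}$, Lipschitz rank one on $X\setminus\{\bx\}$, with $\lip f(\bx)\le\rho$ and
$$
{\rm rg}^+(F+\alpha f)(\bx,\by)\le (1-\alpha)\rho \qdtx{for all}\alpha\in[0,1].
$$

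Next, I would promote the ${\rm rg}^+$ upper bound to an ${\rm rg}$ upper bound in the Asplund setting. Because $f$ is continuous with $f(\bx)=0$, $\gph(F+\alpha f)$ is closed near $(\bx,\by)$, so Lemma~\ref{Lem1} applies again and yields ${\rm rg}(F+\alpha f)(\bx,\by)\le(1-\alpha)\rho$. At the same time, Theorem~\ref{ThBndReg}, applied with $F$ and the perturbation $\alpha f$, provides the matching lower bound
$$
{\rm rg}(F+\alpha f)(\bx,\by)\ge \rho-\alpha\,\lip f(\bx).
$$
Chaining these two inequalities gives $\rho-\alpha\lip f(\bx)\le(1-\alpha)\rho$, i.e.\ $\alpha\lip f(\bx)\ge\alpha\rho$. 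Taking any $\alpha\in(0,1]$, this forces $\lip f(\bx)\ge\rho$; combined with the upper bound from Lemma~\ref{L3.1}, I conclude the two crucial equalities
$$
\lip f(\bx)=\rho \AND {\rm rg}(F+\alpha f)(\bx,\by)=(1-\alpha)\rho \qdtx{for every} \alpha\in[0,1].
$$

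To finish, given $r\in[0,\rg]=[0,\rho]$, I would distinguish two cases. If $\rho>0$, set $\alpha:=r/\rho\in[0,1]$ and take $\tilde f:=\alpha f$. Then $\tilde f\in\mathcal{F}_{lip}$ with $\tilde f(\bx)=0$, it remains Lipschitz rank one on $X\setminus\{\bx\}$ since $f$ is and this property is preserved under scalar multiplication, $\lip\tilde f(\bx)=\alpha\rho=r$, and $ {\rm rg}(F+\tilde f)(\bx,\by)=(1-\alpha)\rho=\rho-r$. If $\rho=0$, then necessarily $r=0$, and the function $\tilde f\equiv0$ trivially works.

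The main obstacle, really the only substantive one at this level, is confirming that the scalar multiple $\alpha f$ preserves the ``Lipschitz rank one on $X\setminus\{\bx\}$'' property used to answer Ioffe's question in its sharpest form; this follows directly from the local representation $f(u)=\xi(u)y$, which becomes $(\alpha f)(u)=(\alpha\xi(u))y$. Everything else is a bookkeeping combination of Lemmas~\ref{Lem1} and \ref{L3.1} with Theorem~\ref{ThBndReg}, together with the fact that adding a continuous single-valued function preserves closedness of the graph near $(\bx,\by)$.
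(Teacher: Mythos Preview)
Your proof is correct and follows essentially the same route as the paper: invoke Lemma~\ref{Lem1} to identify $\rg=\rga$, apply Lemma~\ref{L3.1} to obtain the perturbation $f$, use closedness of $\gph(F+\alpha f)$ together with Lemma~\ref{Lem1} again to pass from ${\rm rg}^+$ to ${\rm rg}$, and combine with Theorem~\ref{ThBndReg} to pin down both $\lip f(\bx)$ and ${\rm rg}(F+\alpha f)(\bx,\by)$ exactly, then rescale by $\alpha=r/\rg$. The only cosmetic difference is that you first establish $\lip f(\bx)=\rho$ for the unscaled $f$ and then rescale, whereas the paper rescales first and deduces $\lip(\alpha f)(\bx)=r$ at the end; the logic is otherwise identical.
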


\begin{proof}
If $\rg=0$, then $r=0$, and we take $f\equiv0$.
Let $\rg<+\infty$ and $r\in [0,\rg]$.
Set $\alpha:=r/\rg$.
Thus, $\alpha\in[0,1]$.
By \cref{Lem1}, $\rga=\rg<+\infty$, and by \cref{L3.1}, there is a function $\tilde f\in\mathcal{F}_{lip}$, Lipschitz rank one on $X\setminus\{\bx\}$, such that
$\lip\tilde f(\bx)\leq\rga$, and
$${\rm rg}^+(F+\alpha \tilde f)(\bx,\by)\leq(1-\alpha)\rga=\rg-r.$$
The function $f:=\alpha\tilde f$ is also Lipschitz rank one on $X\setminus\{\bx\}$, and
$$\lip f(\bx)=\alpha\lip\tilde f(\bx)\leq \alpha\rga=\alpha\rg=r.$$
Since $f$ is continuous near $\bx$, the graph of $F+f$ is closed near $(\bx,\by)$ and, by \cref{Lem1}, ${\rm rg}^+(F+f)(\bx,\by)={\rm rg\,}(F+f)(\bx,\by)$.
Thus,
\[{\rm rg}\,(F+f)(\bx,\by)={\rm rg}^+(F+\alpha \tilde f)(\bx,\by)\leq\rg-r.\]
On the other hand, by \cref{ThBndReg}, ${\rm rg}\,(F+f)(\bx,\by)\geq\rg -\lip f(\bx)\geq\rg-r$.
Hence, $\lip f(\bx)=r$ and ${\rm rg\,}(F+f)(\bx,\by)=\rg-r$.
\qed\end{proof}

\begin{remark}
The single-valued version of \cref{T3.1} in \cite[Theorem 4.1]{Iof03} claims the existence of a perturbation function with the properties as in \cref{T3.1} and being Lipschitz rank one on the whole space $X$.
However, a close look at the proof of \cite[Theorem 4.1]{Iof03} shows that the function constructed there is proved to be Lipschitz rank one only on $X\setminus\{\bx\}$.
\end{remark}

Below is our main radius theorem for metric regularity.
It employs regularity constant \cref{rga} as an upper bound for the radius of metric regularity and extends \cref{T1.2}.

\begin{theorem}
\label{T3.2}
Let $X$ and $Y$ be Banach spaces,
$F:X\rightrightarrows Y$,
$(\bx,\by)\in\gph F$, and $\gph F$ be closed near $(\bx,\by)$.
Then
\begin{equation}
\label{EqRad1}
\rg \leq \Rad{R}{\lip}\leq\rga.
\end{equation}
If $X$ and $Y$ are Asplund, then all inequalities in \eqref{EqRad1} hold as equalities.\\
If, in addition,  $F$ is strongly metrically regular near $(\bx,\by)$ then we also have
\[\Rad{sR}{\lip}=\rg=\rga.\]
\end{theorem}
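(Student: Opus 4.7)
The statement combines the classical lower bound from \cref{T1.2} with a new upper bound built from \cref{L3.1}; the plan is to derive the two inequalities in \eqref{EqRad1} separately and then invoke \cref{Lem1} to collapse them in the Asplund setting.

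The lower bound $\rg\le\Rad{R}{lip}$ is already contained in \cref{T1.2}; it also falls directly out of \cref{ThBndReg}, because any $f\in\mathcal{F}_{lip}$ with $\lip f(\bx)<\rg$ satisfies ${\rm rg}\,(F+f)(\bx,\by)>0$, so $F+f$ remains metrically regular and cannot contribute to the infimum defining $\Rad{R}{lip}$. For the upper bound $\Rad{R}{lip}\le\rga$ the only nontrivial case is $\rga<+\infty$, and in that case \cref{L3.1} delivers a single function $f\in\mathcal{F}_{lip}$ with $\lip f(\bx)\le\rga$ and ${\rm rg}^+(F+f)(\bx,\by)=0$. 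Since $f$ is continuous with $f(\bx)=0$, the map $(x,y)\mapsto(x,y+f(x))$ is a homeomorphism of $X\times Y$ sending $\gph F$ onto $\gph(F+f)$, so $\gph(F+f)$ is closed near $(\bx,\by)$. Consequently \cref{Lem1} applies to $F+f$ and yields ${\rm rg}\,(F+f)(\bx,\by)\le{\rm rg}^+(F+f)(\bx,\by)=0$; thus $F+f$ fails to be metrically regular and, by definition of the radius, $\Rad{R}{lip}\le\lip f(\bx)\le\rga$.

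When $X$ and $Y$ are Asplund, the identity $\rg=\rga$ from \cref{Lem1} forces both inequalities in \eqref{EqRad1} to become equalities. For the strong metric regularity claim, note that strong metric regularity implies metric regularity, so any perturbation that destroys metric regularity also destroys strong metric regularity; taking the infimum in the definition of the radius over a larger class of ``bad'' perturbations therefore gives $\Rad{sR}{lip}\le\Rad{R}{lip}$ for free. Combining this with the inequality $\Rad{sR}{lip}\ge\rg$ from \cref{T1.2} and the already established equality $\Rad{R}{lip}=\rg=\rga$ produces the remaining chain $\Rad{sR}{lip}=\rg=\rga$.

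The substantive mathematical work is entirely hidden inside \cref{L3.1}; once that construction is granted, the present theorem is bookkeeping. The only point where I expect to pause is the verification that $\gph(F+f)$ is closed near $(\bx,\by)$, which is needed to re-apply \cref{Lem1} after perturbation -- a routine observation via the homeomorphism argument above, identical to the one implicit in the proof of \cref{T3.1}.
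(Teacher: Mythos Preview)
Your proof is correct and follows essentially the same route as the paper's: the lower bound is quoted from \cref{T1.2}, the upper bound comes from applying \cref{L3.1} with $\alpha=1$ and then using $\rg\le\rga$ from \cref{Lem1} to conclude that $F+f$ is not metrically regular, and the strong regularity statement is obtained from $\Rad{sR}{lip}\le\Rad{R}{lip}$ combined with \cref{T1.2-1a}. The only superfluous step is your homeomorphism argument for the closedness of $\gph(F+f)$: the inequality part of \cref{Lem1} does not require the graph to be locally closed, so you can apply it to $F+f$ directly (the paper does exactly this without comment).
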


\begin{proof}
The first inequality in \cref{EqRad1} is the second inequality in \cref{T1.2-1} in \cref{T1.2}.
The equalities in Asplund spaces are direct consequences of \cref{Lem1}.
The remaining second inequality in \cref{EqRad1} follows from \cref{L3.1}  together with the definition \cref{rad} of the radius of metric regularity.
Indeed, by \cref{L3.1}, there is a function $f\in\mathcal{F}_{lip}$ such that
$\lip f(\bx)\leq\rga$, and
$0\leq {\rm rg\,}(F+f)(\bx,\by)\leq {\rm rg}^+(F+f)(\bx,\by){=0}$, i.e., $F+f$ is not metrically regular at $(\bx,\by)$, and therefore $\Rad{R}{\lip}\leq\lip f(\bx)\leq \rga$.
Finally, the last assertion follows from $\Rad{sR}{\lip}\leq \Rad{R}{\lip}=\rg$ together with the bound $\Rad{sR}{\lip}\geq \rg$ in \cref{T1.2-1a} in \cref{T1.2}.
\qed\end{proof}

\section{Proof of \cref{L3.1}}
\label{S4}

We assume that $\rga>0$; otherwise the statement trivially holds with $f\equiv0$.
By definition \eqref{rga}, there exist sequences 
$\gph F\ni (x_k,y_k)\to(\bx,\by)$,
$\eps_k\downarrow0$,
$y_k^*\in\Sp_{Y^*}$, and
$x_k^*\in D^*_{\eps_k}F(x_k,y_k)(y_k^*)$
such that
\begin{align}
\label{T4.1P1}
\rga = \lim_{k\to\infty}\norm{x_k^*}.
\end{align}
Consider any real $\ga>\rga$.
Without loss of generality,
we assume that
\begin{align}
\label{T4.1P1+}
\inf_{k\in\N}\norm{x_k^*}>0\AND
\sup_{k\in\N}\norm{x_k^*}<\ga.
\end{align}
We have to consider two cases.

{\bf\em Case 1:}
$x_k\ne\bx$ for infinitely many $k\in\N$.
Denote $t_k:=\|x_k-\bx\|$.
Thus, $t_k\to0$ as $k\to\infty$.
By passing to subsequences and then relabeling appropriately, we can ensure that
\if{$t_k>0$
for all $k\in\N$, and sequences $\{t_k\}$ and $\{t_k-t_{k+1}\}$ are strictly decreasing.
}\fi
$0<t_{k+1}<t_k/2$ for all $k\in\N$. 
As a consequence, $t_k-t_{k+1}>t_{k+1}>t_{k+1}-t_{k+2}$.
Hence, $\{t_k\}$ and $\{t_k-t_{k+1}\}$ are both strictly decreasing sequences of positive numbers.
For each $k\in\N$, define $\rho_k:=(t_k-t_{k+1})/2$.
Thus, $\{\rho_k\}$ is also a strictly decreasing sequence of positive numbers.
Moreover,
\begin{align}
\label{T4.1P4-}
\overline B_{t_k+\rho_k}(\bx)\cap \overline B_{\rho_i}(x_i)=\es \qdtx{for all}i,k\in\N,\;i<k.
\end{align}
Indeed,
let $i,k\in\N$ and $i<k$. Then
\begin{multline*}
\inf_{x\in\overline B_{t_k+\rho_k}(\bx),\; x'\in\overline B_{\rho_i}(x_i)}\|x-x'\|\ge \|x_i-\bx\|-\sup_{x\in\overline B_{t_k+\rho_k}(\bx)}\|x-\bx\|-\sup_{x'\in\overline B_{\rho_i}(x_i)}\|x'-x_i\|
\\
\ge t_i-(t_k+\rho_k)-\rho_i
\ge t_{i}-(t_{i+1}+\rho_{i+1})-\rho_i= \rho_{i}-\rho_{i+1}>0.
\end{multline*}
Since $\overline{B}_{\rho_k}(x_k)\subset \overline{B}_{t_k+\rho_k}(\bx)$, it follows from \eqref{T4.1P4-} that
\begin{align}
\label{T4.1P4}
{\overline{B}_{\rho_k}(x_k)\cap \overline{B}_{\rho_i}(x_i)=\es}
\qdtx{for all}i\ne k.
\end{align}
For each $k\in\N$, choose a point $v_k\in\Sp_{Y}$ such that
\begin{align}
\label{T4.1P5}
\ang{y_k^*,v_k}>1-1/k.
\end{align}
For all $k\in\N$ and $x\in X$, set
\begin{subequations}
\label{Eqf_k_LipPerm}
\begin{align}
\label{T4.1P5+}
s_{k}(x):=& \max\left\{1-\left(\|x-x_k\|/\rho_k\right)^{1+\frac1k}, 0\right\},
\\
\label{T4.1P6}
g_{k}(x):=&\ang{x_k^*,x-x_k}v_k,
\\
\label{T4.1P7}
f_{k}(x):=&s_{k}(x)g_{k}(x).
\end{align}
\end{subequations}
Observe
that $s_{k}(x)=0$ and $f_{k}(x)=0$ for all $x\notin B_{\rho_k}(x_k)$.
In view of \cref{T4.1P4},
the function
\begin{equation}
\label{Eqf_LipPerm}
f(x):=-\sum_{k=1}^\infty f_k(x),\quad x\in X,
\end{equation}
is well defined,
$f(x)=-f_k(x)$ for all $x\in B_{\rho_k}(x_k)$ and all $k\in\N$, and $f(x)=0$ for all ${x\notin\cup_{k=1}^\infty B_{\rho_k}(x_k)}$.
In particular, $f(\bx)=0$.
Observing that
$s_{k}(x_k)=1$,
$g_{k}(x_k)=0$, and
the function $s_{k}$ is differentiable at $x_k$ with
$\nabla s_{k}(x_k)=0$, we have
\begin{align}
\label{T4.1P13}
f(x_k)=0\AND
\nabla f(x_k)=-\ang{x_k^*,\cdot}v_k
\qdtx{for all}k\in\N.
\end{align}
Given any $x,x'\in\overline{B}_{\rho_k}(x_k)$, by the mean-value theorem applied to the function $t\mapsto t^{1+\frac1k}$ on $\R_+$, there is a number $\theta\in[0,1]$ such that
\begin{multline*}
s_{k}(x)-s_{k}(x')= -\left(1+1/k\right)\rho_k^{-(1+\frac1k)} (\theta\|x-x_k\|
\\
+(1-\theta)\|x'-x_k\|)^{\frac1k} (\|x-x_k\|-\|x'-x_k\|),
\end{multline*}
and consequently, assuming without loss of generality that $\|x-x_k\|\le\|x'-x_k\|$, we have
\begin{align*}
|s_{k}(x)-s_{k}(x')|\le \left(1+1/k\right)\rho_k^{-(1+\frac1k)} \|x'-x_k\|^{\frac1k}\|x-x'\|.
\end{align*}
If $x\ne x'$, then
\begin{align}
\notag
&\frac{\|f_{k}(x)-f_{k}(x')\|}{\|x-x'\|}\le \frac{|s_{k}(x)-s_{k}(x')|}{\|x-x'\|}\|g_k(x)\|+ s_{k}(x')\frac{\|g_k(x)-g_k(x')\|}{\|x-x'\|}
\\
\notag
&\qquad\le\left(1+1/k\right) \left({\|x'-x_k\|}/{\rho_k}\right)^{1+\frac1k} \|x_k^*\|
+\left(1-\left({\|x'-x_k\|}/{\rho_k}\right)^{1+\frac1k}\right) \|x_k^*\|
\\
\label{T4.1P11}
&\qquad=\left(\frac1k\left( {\|x'-x_k\|}/{\rho_k}\right)^{1+\frac1k}+1\right) \|x_k^*\|
\le\left(1+1/k\right)\|x_k^*\|.
\end{align}
By \cref{T4.1P1+}, there is a number $\hat k\in\N$ such that
$\left(1+1/ k\right)\|x_k^*\|<\ga$ for all $k>\hat k$.
Thus, for any $k> \hat k$, the function $f_{k}$ is Lipschitz continuous on $\overline{B}_{\rho_k}(x_k)$ with modulus {less than} $\ga$.
As a consequence, we also have
\begin{align}
\label{T4.1P12}
\|f_k(x)\|<\ga(\rho_k-\|x-x_k\|)
\qdtx{for all}x\in B_{\rho_k}(x_k).
\end{align}
Indeed,
given any $x\in{B}_{\rho_k}(x_k)$ with $x\ne x_k$, we set $x':=x_k+\rho_k\frac{x-x_k}{\|x-x_k\|}\in\overline B_{\rho_k}(x_k)$.
Then $f_k(x')=0$, and consequently, $\|f_k(x)\|<\ga\|x'-x\|= \ga(\rho_k-\|x-x_k\|)$.
If $x=x_k$, then, in view of \cref{T4.1P13}, inequality \cref{T4.1P12} holds true trivially.

{\em Claim 1:}
For any $k>\hat k$,
the function $f$ is Lipschitz continuous on $B_{t_k+\rho_k}(\bx)$
with modulus~$\ga$.

Indeed, let $k>\hat k$, $x,x'\in B_{t_k+\rho_k}(\bx)$ and $x\ne x'$.
\\
1) If $x,x'\in B_{\rho_i}(x_i)$ for some $i\ge k$, then $\|f(x)-f(x')\|<\ga\|x-x'\|$ since $f=-f_i$ on $B_{\rho_i}(x_i)$.
\\
2) If $x\in B_{\rho_i}(x_i)$ and $x'\in B_{\rho_j}(x_j)$ for some $i,j\ge k$ with $i\ne j$, then, thanks to \cref{T4.1P4}, we have $\|x_i-x_j\|\ge\rho_i+\rho_j$, and using \cref{T4.1P12}, we obtain
\begin{align*}
\|f(x)-f(x')\|&\le\|f(x)\|+\|f(x')\|< \ga(\rho_i+\rho_j-\|x-x_i\|-\|x'-x_j\|)
\\&\le
\ga(\|x_i-x_j\|-\|x-x_i\|-\|x'-x_j\|)
\le
\ga\|x-x'\|.
\end{align*}
3) If $x\in B_{\rho_i}(x_i)$ for some $i\ge k$, and $x'\notin\bigcup_{j=k}^\infty B_{\rho_j}(x_j)$, then $\|x'-x_i\|\ge\rho_i$, and, thanks to \eqref{T4.1P4-}, we also have $x'\notin\bigcup_{j=1}^{k-1}B_{\rho_j}(x_j)$, implying $f(x')=0$.
Using \cref{T4.1P12}, we obtain
\begin{align*}
\|f(x)-f(x')\|
<
\ga(\|x'-x_i\|-\|x-x_i\|)
\le
\ga\|x-x'\|.
\end{align*}
4) If $x,x'\notin\cup_{j=k}^\infty B_{\rho_j}(x_j)$, then $f(x)=f(x')=0$.
\\
Thus, in all cases,
$\|f(x)-f(x')\|<\ga\|x-x'\|$.
\xqed
\smallskip

Hence, $f\in{\mathcal F}_{lip}$ with $\lip f(\bx)\leq \ga$ and, since $\ga$ can be chosen arbitrarily close to $\rga$, we have $\lip f(\bx)\leq\rga$.
\smallskip

{\em Claim 2:}
The function $f$ is Lipschitz rank one on $X\setminus\{\bx\}$.

Indeed, let $x\in X\setminus\{\bx\}$.
If $x\in\overline{B}_{\rho_k}(x_k)$ for some $k\in\N$, then, thanks to \cref{T4.1P4,T4.1P6,T4.1P7,Eqf_LipPerm}, there is a $\tilde\rho_k>\rho_k$ such that $f(u)=-s_k(u)\ang{x_k^*,u-x_k}v_k$ for all $u\in B_{\tilde\rho_k}(x_k)$.
Note that $B_{\tilde\rho_k}(x_k)$ is a \nbh\ of $x$.
If $x\notin\cup_{k\in\N}\overline{B}_{\rho_k}(x_k)$, then, thanks to \eqref{T4.1P4}, $f(u)=0$ for all $u$ in a sufficiently small neighborhood of $x$.
Hence, $f$ is Lipschitz rank one on $X\setminus\{\bx\}$.
\xqed
\smallskip

Next, consider any $\alpha\in[0,1]$.
In view of \cref{T4.1P13}, for all $k\in\N$, we have
$D^*f(x_k)(y_k^*)=-\ang{y_k^*,v_k}x_k^*$.
Thanks to the first inclusion in \cref{T2.3}, we have
$$\left(1-\alpha\ang{y_k^*,v_k}\right)x_k^*\in D^*_{\eps_k}F(x_k,y_k)(y_k^*)+D^*(\alpha f)(x_k)(y_k^*)\subset D^*_{\eps_k'}(F+\alpha f)(x_k,\by)(y_k^*),$$
where
$\eps_k':=(\alpha\|\nabla f(x_k)\|+1)\eps_k$.
Thanks to \cref{T4.1P13,T4.1P1+},
$\eps_k'=(\alpha\|x_k^*\|+1)\eps_k<(\ga+1)\eps_k$.
Thus, $\eps_k'\to 0$ as $k\to\infty$ and, in view of \cref{T4.1P5}, we conclude that
\[{\rm rg}^+(F+\alpha f)(\bx,\by)\leq\lim_{k\to\infty} \left(1-\alpha(1-1/k)\right)\norm{x_k^*}= (1-\alpha)\rga.\]
This completes the proof in the first case.
\smallskip

{\bf\em Case 2:}
$x_k\ne\bx$ for not more than finitely many $k\in\N$.
Then $x_k=\bx$ for all sufficiently large $k$ and, replacing the sequences by their tails, we can assume that $x_k=\bx$ for all $k\in\N$.
We are going to reduce this case to the previous one.
For that,
we now construct new sequences
$\gph F\ni(\tilde x_k,\tilde y_k)\to(\bx,\by)$ and $\tilde\eps_k\downarrow0$ such that
$\tilde x_k\ne\bx$ and
$x_k^*\in D^*_{\tilde \eps_k}(\tilde x_k,\tilde y_k)(y_k^*)$
for all $k\in\N$.
\smallskip

{\em Claim 3:}
There are not more than finitely many $k\in\N$ with the property
\begin{equation}
\label{EqClaim1}
\exists \rho>0:\quad \gph F\cap B_{\rho}(\bx,y_k)\subset\{\bx\}\times Y.
\end{equation}

Indeed, if \eqref{EqClaim1} is true with some $\rho>0$ for some $k\in\N$, then, by the definition of the $\eps$-coderivative, we can find a $\tilde\rho\in(0,\rho)$ such that, for all $(x,y)\in\gph F\cap B_{\tilde\rho}(\bx,y_k)\setminus\{(\bx,y_k)\}$, we have
\[-\ang{y_k^*,y-y_k}=\ang{x_k^*,x-\bx}-\ang{y_k^*,y-y_k} <\eps_k(\norm{x-\bx}+\norm{y-y_k}),\]
verifying that
$0\in D^*_{\eps_k}F(\bx,y_k)(y_k^*)$.
If there were infinitely many $k\in\N$ fulfilling \eqref{EqClaim1}, this would contradict the assumption that $\rga>0$.
\xqed
\smallskip

Thus,
we can assume that \eqref{EqClaim1} fails for all $k\in\N$.
\if{
 Now define for every $k$ the quantity
\[\sigma_k:=\limsup_{\stackrel{\gph F\ni (x,y)\to(\bx,y_k)}{ x\not=\bx}}\frac {\norm{x-\bx}}{\norm{y-y_k}}\in[0,\infty].\]
{\em Claim 2:} $\liminf_{k\to\infty}\sigma_k=:2\sigma>0$.

Assume on the contrary that $\sigma=0$. By passing to a subsequence, we can assume that $\lim_{k\to\infty}\sigma_k=0$. For every $k$ we can find some radius $\rho_k>0$ such that
for every $(x,y)\in B_{\rho_k}(\bx,y_k)\cap \gph F$ we have
\begin{align*}\norm{x-\bx}\leq \left(\sigma_k+\frac 1k\right)\norm{y-y_k}\ \mbox{and}\ \ang{x_k^*,x-\bx}-\ang{y_k^*,y-y_k}\leq \left(\eps_k+\frac 1k\right)(\norm{x-\bx}+\norm{y-y_k})
\end{align*}
implying
\begin{align*}
-\ang{y_k^*,y-y_k}&\leq \left(\eps_k+\frac 1k\right)(\norm{x-\bx}+\norm{y-y_k})+\norm{x_k^*}\norm{x-\bx}\\
&\leq \left(\eps_k+\frac 1k +\left(\sigma_k+\frac 1k\right)\bar\ga\right)(\norm{x-\bx}+\norm{y-y_k}).
\end{align*}
Thus $0\in D^*_{\eps_k'}F(\bx,y_k)(y_k^*)$ with $\eps_k':=\eps_k+1/k+(\sigma_k+1/k)\bar\ga\to 0$ as $k\to\infty$ and the contradiction $\rga=0$ follows.

Hence Claim 2 holds true as well and we can assume that $\sigma_k>\sigma$ $\forall k$.
}\fi
Choose any number $k\in\N$ and find a number $\rho_k\in (0,1/k)$ such that, for all $(x,y)\in\gph F\cap \overline B_{\rho_k}(\bx,y_k)\setminus\{(\bx,y_k)\}$, it holds
\begin{equation}
\label{EqAuxBnd1}
  \ang{x_k^*, x-\bx}-\ang{y_k^*,y-y_k}< \eps_k(\norm{x-\bx}+\norm{y-y_k}).
\end{equation}
Next, we define the function $\varphi_k:(0,\rho_k]\to\R$ by
\begin{gather}
\label{L3.1P6}
\varphi_k(\rho):= \sup\set{ \frac{\ang{x_k^*,x-\bx}-\ang{y_k^*,y-y_k}- \eps_k\norm{y-y_k}}{\norm{x-\bx}}\Bmv \begin{array}{l}
(x,y)\in B_\rho(\bx,y_k)\\
x\ne\bx,\;
y\in F(x)
\end{array}}.
\end{gather}
\if{It follows that  for every $\rho\in(0,\rho_k]$ we have
\begin{align*}\varphi_k(\rho)&\geq \sup\set{\frac{-\norm{x_k^*}\norm{x-\bx}-(\norm{y_k^*}+\eps_k')\norm{y-y_k}}{\norm{x-\bx}}\Bmv \begin{array}{l}(x,y)\in\gph F\cap B_\rho(\bx,y_k),\\ x\ne \bx\end{array}}\\
&\geq -\norm{x_k^*}-\frac{1+\eps_k'}\sigma.
\end{align*}
}\fi
By
the assumption,
$\varphi_k(\rho)>-\infty$ for all $\rho\in(0,\rho_k]$.
On the other hand, condition \cref{EqAuxBnd1} implies $\varphi_k(\rho_k)\leq \eps_k$.
Since
$\varphi_k$ is nondecreasing on $(0,\rho_k]$,
we have
\begin{gather}
\label{L3.1P7}
\varsigma_k:=\inf_{\rho\in (0,\rho_k]}\varphi_k(\rho)=\lim_{\rho\downarrow 0}\varphi_k(\rho)<\eps_k.
\end{gather}

{\em Claim 4: } $\lim_{k\to\infty}\varsigma_k=0$.

Since
$\eps_k\downarrow0$, in view of \cref{L3.1P7}, it
remains to show that $\liminf_{k\to\infty}\varsigma_k\geq0$.
Assume on the contrary that $\liminf_{k\to\infty}\varsigma_k<0$, i.e., for some number $\eta>0$ and a subsequence of $\varsigma_k$, it holds, without relabeling, that $\varsigma_k<-\eta$ for all $k\in\N$.
For every $k\in\N$,
by \cref{L3.1P7},
we can find a $\tilde\rho_k\in(0,\rho_k]$ with $\varphi_k(\tilde\rho_k)<-\eta$, i.e.,
by definition \cref{L3.1P6},
for all
$(x,y)\in\gph F\cap B_{\tilde\rho_k}(\bx,y_k)$ with $x\ne\bx$, we have
\begin{gather}
\label{L3.1P8}
\ang{x_k^*,x-\bx}-\ang{y_k^*,y-y_k}< \eps_k\norm{y-y_k}-\eta\norm{x-\bx}.
\end{gather}
Moreover, thanks to \cref{EqAuxBnd1}, condition \cref{L3.1P8} is satisfied also for all
$(x,y)\in\gph F\cap B_{\tilde\rho_k}(\bx,y_k)$ with $x=\bx$ and $y\ne y_k$.
Hence, in view of the first inequality in \cref{T4.1P1+}, we have for all
$(x,y)\in\gph F\cap B_{\tilde\rho_k}(\bx,y_k)\setminus\{(\bx,y_k)\}$:
\begin{gather*}
\ang{\big(1-\eta/{\norm{x_k^*}}\big)x_k^*,x-\bx} \leq \ang{x_k^*,x-\bx}+\eta\norm{x-\bx}< \ang{y_k^*,y-y_k}+\eps_k\norm{y-y_k}.
\end{gather*}
Thus, $\big(1-\eta/\norm{x_k^*}\big) x_k^*\in D_{\eps_k}^*F(\bx,y_k)(y_k^*)$ for all $k\in\N$, and, by the definition, we obtain
\[\rga\leq\liminf_{k\to\infty} \big(1-\eta/\norm{x_k^*}\big) \norm{x_k^*}=\liminf_{k\to\infty}\norm{x_k^*} -\eta,\]
which contradicts \cref{T4.1P1}.
\xqed
\smallskip

Fix any number $k\in\N$.
Next, we choose a number $\tilde\rho_k\in(0,\rho_k/2)$,
and then some $(\hat x_k,\hat y_k)\in\gph F\cap B_{\tilde\rho_k}(\bx,y_k)$ with
$\hat x_k\ne\bx$ and
\begin{equation}
\label{EqAuxBnd2}
\frac{\ang{x_k^*,\hat x_k-\bx}-\ang{y_k^*,\hat y_k-y_k}-\eps_k\norm{\hat y_k-y_k}}{\norm{\hat x_k-\bx}}>\varphi_k(\tilde\rho_k)-\frac1k\ge \varsigma_k-\frac1k.
\end{equation}
\if{
implying
\[\ang{x_k^*,\hat x_k-\bx}-\ang{y_k^*,\hat y_k-y_k}-\eps_k'(\norm{\hat x_k-\bx}+\norm{\hat y_k-y_k})> (\varphi_k(\tilde\rho_k)- 1/k-\eps_k')\norm{\hat x_k-\bx}.\]
}\fi
Consider the (continuous) function $\psi_k:\gph F\cap\overline B_{\rho_k}(\bx,y_k)\mapsto\R$ given by
\begin{gather}
\label{L3.1P12}
\psi_k(x,y):=
\eps_k\norm{(x,y)-(\bx,y_k)}-\ang{x_k^*, x-\bx}+\ang{y_k^*,y-y_k}.
\end{gather}
By \eqref{EqAuxBnd1}, $\psi_k(x,y)\ge0$ for all $(x,y)\in\gph F\cap\overline B_{\rho_k}(\bx,y_k)$, while from \eqref{EqAuxBnd2} we obtain:
\[\psi_k(\hat x_k,\hat y_k)<
\eps_k'\norm{\hat x_k-\bx},\]
where $\eps_k':=\eps_k+1/k-\varsigma_k$.
Observe that $\eps_k'>0$, and $\lim_{k\to\infty}\eps_k'=0$ by Claim 4.
By Ekeland's variational principle (see, e.g., \cite[Theorem~2.26]{Mor06.1}), we find a point $(\tilde x_k,\tilde y_k)\in\gph F\cap\overline B_{\rho_k}(\bx,y_k)$ such that
$\psi_k(\tilde x_k,\tilde y_k)\leq \psi_k(\hat x_k,\hat y_k)$, and
\begin{gather}
\label{EqEkeland2}
\norm{(\tilde x_k,\tilde y_k)-(\hat x_k,\hat y_k)}\leq\norm{\hat x_k-\bx},\\
\label{EqEkeland3}
\psi_k(x,y)+\eps_k'\norm{(x,y)-(\tilde x_k,\tilde y_k)}>\psi_k(\tilde x_k,\tilde y_k)
\end{gather}
for all $(x,y)\in\gph F\cap \overline B_{\rho_k}(\bx,y_k) \setminus\{(\tilde x_k,\tilde y_k)\}$.
From \eqref{EqEkeland2}, we obtain:
\begin{align*}
\norm{(\tilde x_k,\tilde y_k)-(\bx,y_k)}&\leq \norm{(\hat x_k,\hat y_k)-(\bx,y_k)}+\norm{(\tilde x_k,\tilde y_k)-(\hat x_k,\hat y_k)}
\\&\leq
\norm{(\hat x_k,\hat y_k)-(\bx,y_k)}+\norm{\hat x_k-\bx}
\\&\leq
2\norm{(\hat x_k,\hat y_k)-(\bx,y_k)}\leq 2\tilde\rho_k<\rho_k.
\end{align*}
Finally, using \cref{L3.1P12,EqEkeland3}, we have for all $(x,y)\in\gph F\cap\overline B_{\rho_k}(\bx,y_k)\setminus\{(\tilde x_k,\tilde y_k)\}$:
\begin{align*}
\ang{x_k^*,x-\tilde x_k}-&\ang{y_k^*,y-\tilde y_k}= \ang{x_k^*,x-\bx}-\ang{y_k^*,y-y_k}- \ang{x_k^*,\tilde x_k-\bx}+\ang{y_k^*,\tilde y_k-y_k}
\\&=
\eps_k\big(\norm{(x,y)-(\bx,y_k)}-\norm{(\tilde x_k,\tilde y_k)-(\bx,y_k)}\big)-\psi_k(x,y)
+\psi_k(\tilde x,\tilde y)
\\&<
(\eps_k+\eps_k')\norm{(x,y)-(\tilde x_k,\tilde y_k)}.
\end{align*}
Since
$(\tilde x_k,\tilde y_k)\in\Int
\overline B_{\rho_k}(\bx,y_k)$, we have $x_k^*\in D_{\tilde \eps_k}^*F(\tilde x_k,\tilde y_k)(y_k^*)$, where $\tilde\eps_k:=\eps_k+\eps_k'\to 0$ as $k\to\infty$.
Thus, we have constructed the desired sequences $(\tilde x_k,\tilde y_k)$ and $\tilde \eps_k$; hence, Case 2 reduces to Case 1, and the proof of \cref{L3.1} is complete.
\qed

\begin{remark}
The above proof is constructive.
The function $f$ with the desirable properties is defined by formulas \cref{Eqf_k_LipPerm,Eqf_LipPerm} which involve special sequences $\{x_k\}$, $\{\rho_k\}$ and $\{v_k\}$.
The procedure adopted here follows that used in \cite{GfrKru}.
It is not unique.
One could try to adjust the techniques used, e.g., in \cite[Proof of Lemma 2]{BarFabKol22}.
\end{remark}

\section*{Declarations}

\noindent{\bf Funding. }The second author benefited
from the support of the Australian Research Council, project DP160100854, and the European Union's Horizon 2020 research and innovation
programme under the Marie Sk{\l }odowska--Curie Grant Agreement No. 823731
CONMECH.

\noindent{\bf Conflict of interest.} The authors have no competing interests to declare that are relevant to the content of this article.

\noindent{\bf Data availability. }
Data sharing is not applicable to this article as no datasets have been generated or analysed during the current study.

\section*{Acknowledgement}

The authors wish to thank the referees for their unique dedication and hard work reading the manuscript, checking every detail, and making impressive effort to help us improve the text.
The paper has indeed strongly benefited from the comments and suggestions of the referees.

Many thanks to the Editor-in-Chief for the perfect choice of the referees and overall handling of our manuscript.

\bibliographystyle{spmpsci}
\bibliography{buch-kr,kruger,kr-tmp}

\end{document}